\theoremstyle{plain}
\newtheorem{theorem}{Theorem}[section]
\newtheorem{proposition}[theorem]{Proposition}
\newtheorem{corollary}[theorem]{Corollary}
\newtheorem{lemma}[theorem]{Lemma}
\numberwithin{equation}{section}
\theoremstyle{definition}
\newtheorem{definition}[theorem]{Definition}
\theoremstyle{remark}
\newtheorem{remark}{Remark}
\DeclareMathOperator{\supp}{supp}
\DeclareMathOperator{\Reg}{Reg}
\DeclareMathOperator{\Lip}{Lip}
\DeclareMathOperator{\Id}{I} 
\def \< {\langle}
\def \> {\rangle}
\def \N {\mathbb{N}}
\def \R {\mathbb{R}}
\def \E {\mathbb{E}}
\def \P {\mathbb{P}}
\def \one {{\bf 1}}
\def \AA {\mathcal{A}}
\def \EE {\mathcal{E}}
\def \NN {\mathcal{N}}
\def \e {\varepsilon}
\def \d {\delta}
\def \l {\lambda}
\def \s {\sigma}
\def \S {\Sigma}
\def \hS {\hat \Sigma}
\def \const {{\mathrm const}}
\begin{document}

\title{Partial estimation of covariance matrices}
  
\author{Elizaveta Levina}
\address{Department of Statistics, 
University of Michigan, 1085 S. University, Ann Arbor, MI 48109, U.S.A.}
\email{elevina@umich.edu}

\author{Roman Vershynin} 
\address{Department of Mathematics, 
University of Michigan, 530 Church St., Ann Arbor, MI 48109, U.S.A.}
\email{romanv@umich.edu}

\date{August 9, 2010, revised February 11, 2011}

\thanks{Partially supported by NSF grants DMS 0805798 (E.L.) and FRG DMS 0918623, DMS 1001829 (R.V.)}

\begin{abstract}
A classical approach to accurately estimating the covariance matrix 
$\Sigma$ of a $p$-variate normal distribution is to draw a sample of size $n > p$ and form a sample covariance matrix. However, many modern applications 
operate with much smaller sample sizes, thus calling for estimation guarantees 
in the regime $n \ll p$.  We show that a sample of size $n = O(m \log^6 p)$ is sufficient to accurately estimate in operator norm an arbitrary symmetric part of $\Sigma$ consisting of $m \le n$ nonzero entries per row.  This follows from a general result on estimating Hadamard products $M \cdot \Sigma$, where $M$ is 
an arbitrary symmetric matrix.   
\end{abstract}

\maketitle

\section{Introduction}

The problem of estimating the population covariance matrix from a sample of $n$ i.i.d.\ observations 
$X_1, \dots, X_n$  in $\R^p$ has attracted a lot of recent interest in statistics, 
with the focus on small sample sizes, $n \ll p$.  
Estimation of covariance matrices plays a key role in many data analysis techniques (e.g. in principal component 
analysis, discriminant analysis, graphical models). 
Applications where $n \ll p$ have become very common in gene expression data, 
climate studies, spectroscopy, and so on. 

Suppose $X_k$'s are drawn from the multivariate normal distribution $\NN_p(0, \S)$, 
where $\S =  \E X X^T$ is a symmetric positive-semidefinite $p \times p$ matrix. Thus
$\S$ is the covariance matrix of that distribution. The usual estimator for $\S$ is 
the sample covariance matrix defined by\footnote{Centering the data 
  with zero mean rather than the sample mean 
  is essentially without loss of generality, since the mean term is of higher 
  order (see Remark~\ref{remark centering} after Corollary~\ref{cor: sparse estimation}).}
\begin{equation}				\label{def:sample}
\hat \S_n = \frac{1}{n} \sum_{k=1}^n X_k X_k^T \ .
\end{equation}
The properties of $\hS_n$ have been extensively studied
\citep{marcenko67, johnstone01, johnstone04}. 
Let us first consider the simpler case where $\S=\Id$. 
The ``Bai-Yin law'' of random matrix theory determines 
the magnitude of the extreme eigenvalues of the Wishart random matrix $\hat \S_n$ in the limit
as $n, p \to \infty$ and $n/p \to \const$.
It says that the spectrum of $\hat \S_n$ is almost surely
contained in the interval $[\frac{a^2}{n} + o(1), \frac{b^2}{n} + o(1)]$
where $a = (\sqrt{n}-\sqrt{p})_+$ and $b = \sqrt{n} + \sqrt{p}$. 
(The upper bound is due to Geman~\cite{Geman}, the lower bound is due to Silverstein~\cite{Silverstein}; 
see \cite{Bai-Yin} for a unified exposition). 
It follows that, with high probability, 
$$
\|\hS_n - \Id\| \leq 2 \sqrt{\frac{p}{n}} + \frac{p}{n} + o(1)
$$
where $\| \cdot \|$ is the spectral norm, also known as the operator or $\ell_2$ matrix norm.  

A similar approximation holds for random vectors $X_k$ drawn from 
a general multivariate normal distribution $\NN_p(0, \S)$ (where $\S$ depends on $p$).
Indeed, arguing as above for the random vectors $\S^{-1/2} X_k$ whose covariance
matrix is identity, we easily conclude that with high probability,
\begin{equation}		\label{eq:sample}
\|\hS_n - \S\| \leq \Big( 2 \sqrt{\frac{p}{n}} + \frac{p}{n}  + o(1) \Big) \, \|\S\|.
\end{equation}
This implies that for an arbitrary precision $\e \in (0,1)$ and for $p$ sufficiently large,
the sample size 
\begin{equation}				\label{e-close} 
n \gtrsim \e^{-2} p \quad \text{suffices for} \quad \|\hS_n - \S\| \le \e \|\S\|.
\end{equation} 
More accurately, the following non-asymptotic result holds for arbitrary $p \in \N$,
$\e \in (0,1)$ and $t \ge 1$. If the sample size satisfies $n \ge C(t/\e)^2 p$, then inequality \eqref{e-close} 
holds with probability at least $1 - 2\exp(-t^2 n)$ \cite[Remark 51]{v-rmt-tutorial}. 
Here $C$ is an absolute constant.
This provides a satisfactory answer to the covariance estimation problem
in the regime $n \ge p$ for general $p$-variate normal distributions.

We will now focus on the more difficult regime $n < p$, where covariance estimation 
is impossible in general (indeed, 
if $\Sigma=I$ then clearly $\|\hS_n - I\| \geq 1$ because of the rank deficiency of $\hS_n$). 
Still, under natural structural assumptions on the covariance matrix $\S$,
a number of alternative estimation schemes have been proposed. 
  
A common assumption is that most entries of $\S$ are zero or close to 0, 
and thus can be safely estimated as 0, which reduces the effective size of the problem.  
This assumption underlies banding or tapering the covariance matrix in the case of ordered variables 
\cite{bl06,furrer06,rothman10,cai08}, and thresholding of the covariance matrix 
in the case of unordered variables \cite{bl07, elkaroui07, genthresh, cai10}. 
In both cases, the regularized estimators have the form $M \cdot \hS_n$ 
where $M$ is a regularizing ``mask'' matrix, and ``$\cdot$'' denotes the Hadamard (entrywise) product of
matrices. For banding and hard thresholding, $M$ is a sparse matrix containing only 0s and 1s;  
for tapering, its entries can be anywhere in the interval $[0,1]$, depending on the taper used.   
For thresholding, the mask $M$ has to be estimated from $\hS$, 
whereas for banding and tapering it is determined by a tuning parameter.

The reason regularized estimators work is best understood from the following decomposition:  
$$
\| M \cdot \hS_n - \S \| \le \| M \cdot \hS_n -  M \cdot \S \| + 
\| M \cdot \S - \S \| \ . 
$$
The first term (variance) is well-behaved because $M$ is sparse or close to sparse, and
the second term (bias) is well-behaved because of the assumptions made on the true $\Sigma$.  
The overall rate for covariance estimation is normally computed as the sum of these two terms, 
but they come from two quite different problems:  the first term has to do with how much 
of the covariance matrix we can reliably estimate with the corresponding part of the sample covariance matrix;  
and the second term has to do with our model assumptions.  

In this paper, we focus on the first question: {\em  how large does the 
sample size $n$ have to be so that $M \cdot \hS_n$ is a good approximation 
to $M \cdot \S$}?  Of course, the answer 
will depend on the size of the part we would like to estimate,
but it almost does not depend on the ambient dimension $p$ as we will show.

As a simple example, consider the case where $M = (m_{ij})$ is a fixed minor, 
i.e. $m_{ij} = \one_{\{i,j \in S\}}$
for some a priori given index set $S \subset \{1, \dots, p\}$. Denoting $|S|=m$, 
we can easily deduce from the results above that 
\begin{equation}								\label{minor}
\|M \cdot \hS_n - M \cdot \S\| 
\leq \Big( 2 \sqrt{\frac{m}{n}} + \frac{m}{n} + o(1) \Big) \, \|\S\|.
\end{equation}
Note that the quality of approximation does not depend on the ambient dimension $p$
but only on the number of selected variables $m$, as one would expect. 

\medskip

The purpose of this paper is to reveal a general phenomenon behind \eqref{minor}.
We shall show that, up to a necessary logarithmic factor in $p$, 
the upper bound in \eqref{minor} {\em holds for arbitrary symmetric $0/1$ matrices $M$ with 
at most $m$ nonzero entries per column}. This result is independent of any structural hypotheses
on the locations of the nonzero entries. 

Even more generally, a stable version of \eqref{minor} holds. 
It applies to completely {\em arbitrary symmetric matrices $M$},
and therefore covers all forms of tapering previously considered in the literature.    
Instead of the sparsity parameter $m$, the bound is governed by two different operator norms of $M$
that correspond to the different dependencies on $m$ in the two terms in \eqref{minor}.   
These norms are the $\ell_1 \to \ell_2$ operator norm $\|M\|_{1,2} = \max_j (\sum_i m_{ij}^2)^{1/2}$, 
and the usual $\ell_2 \to \ell_2$ operator norm $\|M\|$. 

\medskip

Note that since the right hand side of \eqref{e-close} and \eqref{minor} depends on $\| \Sigma\|$, these are relative rather than the absolute error bounds on the difference between $\hS$ and $\S$.  The latter are more commonly analyzed in the literature, even though numerical results are commonly reported as relative errors to enable comparisons across models.  The advantage of using the relative error is disentangling the convergence rates of the estimator 
from assumptions on the true covariance $\Sigma$, which is one of our goals in this paper.

\section{Main result}

Throughout this paper, we consider an arbitrary mean 0 normal distribution in $\R^p$.
Its covariance matrix is denoted by $\Sigma$, and the sample covariance matrix \eqref{def:sample} obtained 
from an i.i.d.\ sample of $n$ observations by $\hS_n$.
Positive absolute constants will be denoted $C, C_1, C_2$.
\begin{theorem}	[Estimation of Hadamard products]				\label{main}   
  Let $M$ be an arbitrary fixed symmetric $p \times p$ matrix. Then  
  \begin{equation}										\label{eq: main}
  \E \|M \cdot \hS_n - M \cdot \S\| 
  \le C \log^3(2p) \Big( \frac{\|M\|_{1,2}}{\sqrt{n}} + \frac{\|M\|}{n} \Big) \, \|\S\|.
  \end{equation}
\end{theorem}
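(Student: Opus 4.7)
The plan is to realize the target matrix as a sum of centered i.i.d.\ random matrices and apply a matrix concentration inequality. Using the elementary identity $M \cdot (xx^T) = D_x M D_x$, where $D_x$ is the diagonal matrix with entries $x_1, \dots, x_p$, one obtains
\[
M \cdot \hS_n - M \cdot \S = \frac{1}{n}\sum_{k=1}^n (Y_k - \E Y), \qquad Y_k := D_{X_k} M D_{X_k},
\]
so the problem reduces to bounding the spectral norm of an i.i.d.\ sum of centered symmetric random matrices. This is tailor-made for a matrix Bernstein-type inequality, whose two ingredients are a bound on the matrix variance $\sigma^2 := \|\E Y^2 - (\E Y)^2\|$ (driving the $1/\sqrt n$ fluctuation) and a bound on the typical size of $\|Y_k\|$ (driving the $1/n$ correction).

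For the variance I would compute $\E Y^2$ entry-wise using Isserlis' formula for Gaussian fourth moments, which yields
\[
\E Y^2 \;=\; \S \cdot (M D_\S M) \;+\; 2 (M \cdot \S)^2,
\]
with $D_\S$ the diagonal of $\S$. Since $\S$ and $M D_\S M$ are both positive semidefinite, the Schur product theorem gives $\|\S \cdot (M D_\S M)\| \le \|\S\| \max_i (M D_\S M)_{ii} \le \|\S\|^2 \|M\|_{1,2}^2$, using $(M D_\S M)_{ii} = \sum_j \S_{jj} m_{ij}^2 \le \|\S\|\|M e_i\|^2$. Similarly, the elementary Hadamard inequality $\|A \cdot B\| \le \max_i \|Ae_i\|\cdot\max_j \|Be_j\|$ (for symmetric $A,B$) combined with $\|\S e_j\|^2 = (\S^2)_{jj} \le \|\S\|^2$ yields $\|M \cdot \S\| \le \|M\|_{1,2}\|\S\|$ and hence $\|(M \cdot \S)^2\| \le \|M\|_{1,2}^2\|\S\|^2$. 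Together these give $\sigma^2 \lesssim \|M\|_{1,2}^2 \|\S\|^2$, which is exactly the scaling needed for the leading term $\|M\|_{1,2}\|\S\|/\sqrt n$.

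The main obstacle is that $Y_k$ is a Gaussian quadratic form and hence unbounded, with only sub-exponential tails: $\|Y_k\| \le \|X_k\|_\infty^2 \|M\|$ and $\E\max_k \|X_k\|_\infty^2 \lesssim \|\S\|\log(pn)$. I would address this either by invoking a Bernstein-type inequality for sub-exponential random matrices (with Orlicz-$1$ size parameter of order $\|M\|\|\S\|$), or, in the Rudelson spirit, by symmetrizing and using the non-commutative Khintchine inequality
\[
\E_\e \Big\| \sum_k \e_k Y_k \Big\| \le C\sqrt{\log p}\, \Big\| \sum_k Y_k^2 \Big\|^{1/2},
\]
combined with the semidefinite domination $Y_k^2 \preceq \|X_k\|_\infty^2 \cdot D_{X_k} M^2 D_{X_k}$ and the identity $\sum_k D_{X_k} M^2 D_{X_k} = n(M^2 \cdot \hS_n)$, which reduces the variance estimate to a Hadamard product of the same form (with $M$ replaced by $M^2$) that can be closed by a bootstrap/iteration argument. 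In either route, the three logarithmic factors arise from non-commutative Khintchine, from Gaussian concentration of $\|X_k\|_\infty$, and from the extra cost of replacing a bounded summand with a sub-exponential one. The delicate step is arranging these estimates so that the $\|M\|$-heavy contribution is absorbed entirely into the $\|M\|/n$ correction and does not pollute the leading $\|M\|_{1,2}/\sqrt n$ term.
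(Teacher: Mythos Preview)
Your approach is correct and genuinely different from the paper's. The paper does not use matrix concentration at all: it first \emph{decouples} $M\cdot\hS_n$ to $M\cdot\S'_n=\frac1n\sum_k X'_k\otimes X_k$ via a Gaussian-chaos decoupling lemma, then computes the operator norm on a multiscale net of ``regular'' vectors $x\in\Reg_p(r)$, $y\in\Reg_p(s)$ (vectors whose nonzero coordinates are all $\pm 1/\sqrt{s}$). For fixed $x$ and conditionally on $(X_k)$, the bilinear form $\langle (M\cdot\S'_n)x,y\rangle$ is Gaussian in $(X'_k)$ with conditional standard deviation $\sigma_x(X_1,\dots,X_n)\|y\|_\infty$; the paper then controls $\sigma_x$ by Gaussian concentration in $(X_k)$, showing $\E\sigma_x\le\|M\|_{1,2}/\sqrt n$ and $\|\sigma_x\|_{\Lip}\le\|M\|/(\sqrt r\,n)$, and closes with a union bound over the net. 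Your route---writing $M\cdot(X_kX_k^T)=D_{X_k}MD_{X_k}$ and invoking a matrix Bernstein inequality with the Isserlis-based variance $\E Y^2=\S\cdot(MD_\S M)+2(M\cdot\S)^2$---is more streamlined and, with a careful truncation of $\|X_k\|_\infty^2$, should in fact yield fewer logarithms than $\log^3 p$. (Your column-norm bounds are fine: the Schur test plus Cauchy--Schwarz gives $\|M\cdot\S\|\le\max_i\sum_j|m_{ij}\S_{ij}|\le\|M\|_{1,2}\max_i\|\S e_i\|_2\le\|M\|_{1,2}\|\S\|$.) The trade-off is that your argument imports the non-commutative Khintchine/matrix Bernstein machinery, whereas the paper's proof is self-contained from scalar Gaussian concentration and an $\e$-net; on the other hand, your approach would extend to sub-gaussian $X_k$ with essentially no change, while the paper's decoupling step is specific to rotation-invariant (Gaussian) vectors.
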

Note that $M$ does not depend on $\hS_n$ or $\S$.  We will prove this result in Section \ref{s: proof}.

\begin{corollary}[Partial estimation]					\label{cor: sparse estimation}		
\label{partial}
  Let $M$ be an arbitrary fixed symmetric $p \times p$ matrix such that all of its entries 
  are equal to $0$ or $1$, and there are at most $m$ nonzero entries in each column. 
  Then 
  $$
  \E \|M \cdot \hS_n - M \cdot \S\| 
  \le C \log^3(2p) \Big(\sqrt{\frac{m}{n}} + \frac{m}{n} \Big) \, \|\S\|.
  $$ 
\end{corollary}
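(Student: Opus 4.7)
The plan is to derive Corollary~\ref{cor: sparse estimation} as a direct consequence of Theorem~\ref{main} applied to the specific class of $0/1$ masks $M$. The only work is to bound the two quantities $\|M\|_{1,2}$ and $\|M\|$ in terms of the sparsity parameter $m$; once this is done the conclusion is just a substitution.

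First I would bound $\|M\|_{1,2} = \max_j(\sum_i m_{ij}^2)^{1/2}$. Since each entry $m_{ij}$ is either $0$ or $1$, we have $m_{ij}^2 = m_{ij}$, and the hypothesis that each column contains at most $m$ nonzero entries gives $\sum_i m_{ij}^2 \le m$ for every $j$. Hence $\|M\|_{1,2} \le \sqrt{m}$.

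Next I would bound the spectral norm $\|M\|$. Because $M$ is symmetric and $0/1$ with at most $m$ ones per column (hence also per row), the maximum absolute row sum and maximum absolute column sum are both at most $m$. Applying the standard interpolation inequality $\|M\|^2 \le \|M\|_{1\to 1}\,\|M\|_{\infty\to\infty}$ (equivalently, the Schur test for nonnegative matrices) then gives $\|M\| \le m$. I do not anticipate any obstacle here; this is the kind of step where one just has to make sure the symmetry hypothesis is actually used to get the row sum bound from the column sum bound.

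Finally, I would plug these two estimates into \eqref{eq: main}, obtaining
\begin{equation*}
\E \|M \cdot \hS_n - M \cdot \S\|
\le C \log^3(2p) \Big( \frac{\sqrt{m}}{\sqrt{n}} + \frac{m}{n} \Big) \, \|\S\|
= C \log^3(2p) \Big( \sqrt{\frac{m}{n}} + \frac{m}{n} \Big) \, \|\S\|,
\end{equation*}
which is the claimed bound. Since the entire argument is a two-line reduction to Theorem~\ref{main}, there is no real obstacle; the only mild subtlety is remembering that symmetry is what makes $\|M\|$ easy to control from the column-sparsity assumption alone.
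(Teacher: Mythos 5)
Your proposal is correct and is exactly the paper's argument: the paper's proof consists precisely of noting $\|M\|_{1,2}\le\sqrt{m}$ and $\|M\|\le m$ and invoking Theorem~\ref{main}. You have merely supplied the (standard) details of those two norm bounds, including the correct observation that symmetry is needed to pass from column sums to row sums in the Schur test.
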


\begin{proof}
We note that $\|M\|_{1,2} \le \sqrt{m}$ and $\|M\| \le m$ and apply Theorem~\ref{main}.
\end{proof}

\begin{remark}[Sample size]
Corollary~\ref{cor: sparse estimation} implies that for every $\e \in (0,1)$, the sample size
\begin{equation}								\label{nmp}
n \ge 4C^2 \e^{-2} m \log^6(2p)
\quad \text{suffices for} \quad
\E \|M \cdot \hS_n - M \cdot \S\| \le \e \|\S\|.
\end{equation}
For sparse matrices $M$ with $m \ll p$, this makes partial estimation possible with $n \ll p$ observations. 
Therefore we regard \eqref{nmp} as a satisfactory ``sparse'' version of the classical bound \eqref{e-close}. 
A logarithmic term necessarily has to appear in \eqref{nmp}, see Remark~\ref{r: log} below.
\end{remark}

\begin{remark}[Compressed sensing]
  Results of a similar nature arise in {\em compressed sensing} (see e.g. \cite{fornasier-rauhut-cs}). 
  There one tries
  to reconstruct a signal $x \in \R^p$ from its $n$ linear measurements $y = Ax \in \R^n$.
  In the range $n \ge p$, where the problem is overdetermined, the reconstruction is achieved by inverting the $n \times p$ matrix $A$
  (assuming it has full rank). In the range $n \ll p$, the problem is underdetermined, and a reconstruction is only possible 
  under certain structural assumptions on the signal $x$ such as sparsity. If $x$ has $m$ non-zero 
  coordinates, then the typical results of compressed sensing guarantee an exact and algorithmically 
  effective reconstruction provided that $n \gtrsim m \log^C p$. 

  Our result \eqref{nmp} is of a similar nature:
  $n \sim m \log^6 p$ observations suffice to estimate an $m$-sparse part of a covariance matrix in dimension $p$.
  A significant difference is that the compressed sensing problem becomes easy if 
  one knows the locations of the non-zero coordinates of the signal $x$. 
  In contrast, the covariance estimation problem remains non-trivial even if one knows 
  the location of the $m$-sparse part of the covariance matrix 
  (which is the situation in Corollary~\ref{cor: sparse estimation}).
\end{remark}

\begin{remark}[Logarithmic factor]					\label{r: log}
  In Corollary~\ref{partial} note the mild, logarithmic dependence on 
  the dimension $p$ and the optimal dependence on the sparsity $m$, 
  which is of the same type as in \eqref{minor}.   The necessity of the logarithmic term in our 
  results is evident from the example where $\S = M = \Id$. 
  Indeed, writing in coordinates $X_k = (X_{k1}, \ldots, X_{kp}) \in \R^p$, we obtain
  $$
  \E \|M \cdot \hS_n - M \cdot \S\| 
  = \E \max_{i \in [p]} \Big| \frac{1}{n} \sum_{k=1}^n X_{ki}^2 - 1 \Big| 
  \sim  \sqrt{\frac{\log(2p)}{n}}.
  $$
  We may compare this with the conclusion of Theorem~\ref{main} and Corollary~\ref{partial} for this 
  example, which is
  $$
  \E \|M \cdot \hS_n - M \cdot \S\| 
  \le C \frac{\log^3(2p)}{\sqrt{n}}.
  $$
  We see that the logarithmic terms in these results are unavoidable.
  However, the present exponent $3$ of the logarithm is certainly not optimal, 
  and it can probably be reduced to the optimal value $1/2$.
\end{remark}

\begin{remark}[Centering]		\label{remark centering}
If we center the data with the sample mean 
$\bar X = \frac{1}{n} \sum_{k = 1}^n X_k$ rather than the true mean, as one would in practice, we have 
$$
\E \| M \cdot (\S_n - \bar X \bar X^T) - M \cdot \S \| \le 
\E \| M \cdot \S_n - M \cdot \S \| + \E \| M \cdot (\bar X \bar X^T) \|
$$ 
Then, denoting by $\|x\|_\infty = \max_{i} |x_i|$ the $\ell_\infty$ norm in $\R^p$, we can check that
$$
\E \| M \cdot (\bar X \bar X^T) \| 
\le \E \|M\| \|\bar X\|_\infty^2 
\le C \|M\| \|\Sigma\| \frac{\log(2p)}{n},
$$
so this term can be absorbed into the second term in \eqref{eq: main}.
\end{remark}

\begin{remark}[Identifying the non-zero entries of $\Sigma$ by thresholding]
  For thresholding covariance matrices, our result is not directly applicable 
  since the matrix $M$ is random and estimated from data.   
  However, we note that if the assumption is made that all non-zero entries in $\Sigma$ 
  are bounded away from zero by a margin of $h > 0$, then a sample size of $n \gtrsim h^{-2} \log(2p)$ 
  would assure that all their locations are estimated correctly with probability approaching 1.   
  With this assumption, we could derive a bound for the thresholded estimator.  
  Since our focus here is not on any one particular estimator, we do not pursue this extension, 
  particularly since there are already bounds available specifically for thresholding \cite{bl07,cai10}. 
\end{remark}

\begin{remark}[Previous results]
  Results similar to Corollary~\ref{cor: sparse estimation} have been known for {\em banding} covariance
  matrices. They apply to the mask matrices $M$ with $m$ diagonals consisting of $1$'s (while all other entries are zero). 
  A result of \cite{bl06} guarantees in this case an error bound of order $m \sqrt{\log p/n}$. 
  While this bound has a slightly better dependence on $p$, 
  it implies that the sample size $n$ required for estimation grows as $m^2$, 
  whereas our result shows $n$ growing linearly with $m$.  
  Furthermore, for some tapered version of the $m$-diagonal matrix $M$, an error of order
  $\sqrt{(m + \log p)/n}$ was obtained in \cite{cai08}, which agrees with ours up to $\log p$. 
  The latter result heavily exploits the structure of the tapered banded matrix $M$, which allows one 
  to decompose it into a weighted sum of small minors and reduce the problem to using 
  \eqref{minor} for each minor. In contrast, our result does not require any structure of the mask matrix $M$. 
\end{remark}

\subsection*{Acknowledgement}
  The authors are grateful to the anonymous referee for careful reading of the manuscript and useful suggestions.

\section{Preliminaries}							\label{s: preliminaries}

\subsection{Proof outline} 
In the rest of this paper we prove Theorem~\ref{main}. 
We shall observe that the quadratic form $\< (M \cdot \hS_n)x, y \> $
is a Gaussian chaos (defined in \eqref{gchaos} below) 
for fixed unit vectors $x,y$ on the sphere $S^{p-1}$.
The main difficulty is to control the chaos uniformly for all $x,y$. 
This will be done by establishing concentration inequalities 
of varying power depending on the ``sparsity'' of $x,y$ 
(which amounts to decoupling, conditioning, and Gaussian concentration), 
and combining this with covering arguments to ``count'' the number of 
sparse vectors $x,y$ on the sphere.

\subsection{Decoupling}

The following observation does not seem to be found in 
literature on decoupling (such as \cite{delapena-gine-decoupling}). 

\begin{proposition}[Decoupling of sample covariance matrices]				\label{decoupling}
  Let $X$ be a centered random normal vector in $\R^p$ with covariance matrix $\S$, and let 
  $X_1,\ldots, X_n$ and $X'_1, \ldots, X'_n$ be independent copies of $X$.
  Consider the sample covariance matrix and the decoupled sample covariance matrix of $X$ defined as
  \begin{equation}							\label{Sn'}
  \hS_n = \frac{1}{n} \sum_{k=1}^n X_k \otimes X_k, \quad 
  \S'_n = \frac{1}{n} \sum_{k=1}^n X'_k \otimes X_k.
  \end{equation}
  Then for every symmetric $p \times p$ matrix $M$ we have
  $$
  \E \| M \cdot \hS_n - M \cdot \S\| \le 2 \E \|M \cdot \S'_n\|.
  $$ 
\end{proposition}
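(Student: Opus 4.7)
The plan is to run a standard symmetrization followed by a Gaussian rotation trick that converts the difference of rank-one terms into a rank-one product of independent Gaussians.

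First I would symmetrize. Since $\S = \E[X'_k \otimes X'_k]$, Jensen's inequality applied to the conditional expectation over the primed copies gives
\begin{equation*}
\E \| M \cdot \hS_n - M \cdot \S \|
= \E \Bigl\| M \cdot \Bigl( \tfrac{1}{n} \sum_{k=1}^n \E'[X_k \otimes X_k - X'_k \otimes X'_k] \Bigr) \Bigr\|
\le \E \Bigl\| M \cdot \tfrac{1}{n} \sum_{k=1}^n (X_k \otimes X_k - X'_k \otimes X'_k) \Bigr\|,
\end{equation*}
using that Hadamard product with a fixed $M$ is linear and the operator norm is convex.

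Next I would exploit the rotational invariance of Gaussians. Set $Y_k = (X_k + X'_k)/\sqrt{2}$ and $Z_k = (X_k - X'_k)/\sqrt{2}$. Then $Y_k$ and $Z_k$ are \emph{independent} centered normal vectors, each with covariance $\S$, and the pair $(Y_k, Z_k)$ has the same joint law as $(X'_k, X_k)$. A direct expansion yields the polarization identity
\begin{equation*}
X_k \otimes X_k - X'_k \otimes X'_k = Y_k \otimes Z_k + Z_k \otimes Y_k.
\end{equation*}

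Now I would use that $M$ is symmetric. Since $(A \otimes B)^T = B \otimes A$ and $M \cdot A^T = (M \cdot A)^T$ whenever $M = M^T$, the two terms $M \cdot (Y_k \otimes Z_k)$ and $M \cdot (Z_k \otimes Y_k)$ are transposes of one another and so have identical operator norms. Combining this with the triangle inequality and the distributional identity $\tfrac{1}{n} \sum_k Y_k \otimes Z_k \stackrel{d}{=} \tfrac{1}{n} \sum_k X'_k \otimes X_k = \S'_n$ gives
\begin{equation*}
\E \Bigl\| M \cdot \tfrac{1}{n} \sum_k (Y_k \otimes Z_k + Z_k \otimes Y_k) \Bigr\|
\le 2 \, \E \Bigl\| M \cdot \tfrac{1}{n} \sum_k Y_k \otimes Z_k \Bigr\|
= 2 \, \E \| M \cdot \S'_n \|,
\end{equation*}
which is the desired bound. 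There is no real obstacle here beyond being careful with the rotation identity; the main conceptual point is that a naive symmetrization alone would not decouple the two copies of $X_k$ inside a single term $X_k \otimes X_k$, and it is the Gaussian structure (specifically, that orthogonal combinations of independent Gaussians remain independent) that delivers the decoupling with the explicit constant $2$.
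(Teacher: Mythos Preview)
Your proof is correct and follows essentially the same approach as the paper. The paper first rewrites $\|M\cdot\hS_n - M\cdot\S\|$ as a supremum of a Gaussian chaos $\sup_a|\langle (M\cdot\hS_n)a,a\rangle - \E\langle (M\cdot\hS_n)a,a\rangle|$ and then proves a general decoupling lemma for such chaoses via the same Jensen-plus-rotation argument (using $\langle AZ,Z\rangle - \langle AZ',Z'\rangle = 2\langle A\tfrac{Z+Z'}{\sqrt 2},\tfrac{Z-Z'}{\sqrt 2}\rangle$ for symmetric $A$); you carry out the identical computation directly at the matrix level with the polarization $X_k\otimes X_k - X'_k\otimes X'_k = Y_k\otimes Z_k + Z_k\otimes Y_k$ and the transpose symmetry from $M=M^T$. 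The content is the same, only the packaging differs.
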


\begin{proof}
Computing the operator norm as the maximal value of the associated
quadratic form, we have 
\begin{align*}
\E \| M \cdot \hS_n - M \cdot \S\|
&= \E \| M \cdot \hS_n - \E (M \cdot \hS_n)\| \\
&= \E \sup_{a \in S^{p-1}} |\< (M \cdot \hS_n) a, a \> - \E \< (M \cdot \hS_n) a, a \> |.
\end{align*}
Writing the inner product in coordinates
$$
\< (M \cdot \hS_n) a, a \> = \frac{1}{n} \sum_{k=1}^n \sum_{i,j=1}^p m_{ij} a_i a_j X_{k i} X_{k j}
$$
we recognize in the right hand side a {\em quadratic Gaussian chaos}. 
Recall that generally, a quadratic Gaussian chaos in dimension $p$ is a quadratic form 
\begin{equation}
\sum_{i,j=1}^p a_{ij} Z_i Z_j  = \< AZ,Z \>
\label{gchaos}
\end{equation}
where $Z=(Z_1,\ldots,X_p)$ is a centered normal random vector in $\R^p$, 
and the coefficients $A = (a_{ij})_{i,j=1}^p$
are taken from some subset $\AA$ of $p \times p$ matrices. 
The conclusion of the proposition then follows from the next lemma. 
\end{proof}

\begin{lemma}[Decoupling of a Gaussian chaos]				\label{decoupling chaos}
  Let $Z$ be a centered normal random vector in $\R^d$, and let $Z'$ be an independent copy of $Z$. 
  Let $\AA$ be a subset of symmetric $d \times d$ matrices. Then 
  $$
  \E \sup_{A \in \AA} | \< AZ,Z\> - \E \< AZ,Z\> |
  \le 2 \E \sup_{A \in \AA} | \< AZ,Z'\> |.
  $$
\end{lemma}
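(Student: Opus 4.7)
The plan is to prove Lemma~\ref{decoupling chaos} by the standard Gaussian rotation trick, which is especially clean here because the Gaussian law is invariant under orthogonal transformations in the $(Z,Z')$ plane, allowing \emph{exact} decoupling rather than the lossy versions needed for general distributions.

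First, I would symmetrize by replacing the mean with an independent copy. Since $\E\<AZ,Z\> = \E\<AZ',Z'\>$, and the supremum is convex, Jensen's inequality (taking $\E$ with respect to $Z'$ inside the supremum) gives
\begin{equation*}
\E \sup_{A \in \AA} | \< AZ,Z\> - \E \< AZ,Z\> |
\le \E \sup_{A \in \AA} | \< AZ,Z\> - \< AZ',Z'\> |.
\end{equation*}

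Next, I would introduce the rotated pair $U = (Z+Z')/\sqrt{2}$ and $V = (Z-Z')/\sqrt{2}$. Because $Z$ is centered Gaussian and $Z'$ is an independent copy, $(U,V)$ is jointly Gaussian with $\E[UV^T] = 0$, so $U$ and $V$ are independent, each distributed as $Z$. Inverting, $Z = (U+V)/\sqrt{2}$ and $Z' = (U-V)/\sqrt{2}$. A direct expansion using the symmetry of $A$ gives
\begin{align*}
\<AZ,Z\> - \<AZ',Z'\>
&= \tfrac{1}{2}\bigl[\<A(U+V),U+V\> - \<A(U-V),U-V\>\bigr] \\
&= \tfrac{1}{2}\bigl[2\<AU,V\> + 2\<AV,U\>\bigr]
= 2\<AU,V\>.
\end{align*}

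Combining the two steps,
\begin{equation*}
\E \sup_{A \in \AA} | \< AZ,Z\> - \E \< AZ,Z\> |
\le 2\,\E \sup_{A \in \AA} | \<AU,V\> |
= 2\,\E \sup_{A \in \AA} | \<AZ,Z'\> |,
\end{equation*}
where the final equality uses that $(U,V)$ has the same joint distribution as $(Z,Z')$.

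There is no real obstacle here; the only subtle point worth stating carefully is that $U$ and $V$ are genuinely independent, which is specific to the Gaussian setting and is what makes the decoupling constant equal to $2$ rather than some larger universal constant. Once Lemma~\ref{decoupling chaos} is established, Proposition~\ref{decoupling} follows directly by applying it conditionally with $\AA$ taken to be the family of $p \times p$ matrices $\{(m_{ij}a_ia_j)_{i,j=1}^p : a \in S^{p-1}\}$ indexed by unit vectors $a$, since for this family the chaos $\<AZ,Z\>$ equals $\<(M\cdot \hS_n)a,a\>$ (up to the $1/n$ averaging absorbed into $Z = X_k$ per summand), matching the expression computed in the proof of Proposition~\ref{decoupling}.
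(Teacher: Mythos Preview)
Your proof is correct and follows essentially the same route as the paper: symmetrize via Jensen's inequality, then apply the Gaussian rotation $(Z,Z')\mapsto\big((Z+Z')/\sqrt{2},(Z-Z')/\sqrt{2}\big)$ together with the symmetry of $A$ to obtain the factor~$2$. The only cosmetic difference is that the paper first reduces to the standard normal case before invoking rotation invariance, whereas you argue directly for general covariance by checking $\E[UV^T]=0$; both are equally valid.
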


Lemma~\ref{decoupling chaos} allows one to replace a Gaussian chaos $\sum_{i,j} a_{ij} Z_i Z_j$
by its decoupled version $\sum_{i,j} a_{ij} Z'_i Z_j$.
While it can be derived from observations in \cite[Section 3.2]{LT},
it is simpler to give a complete proof.

\begin{proof}
Without loss of generality, we may assume that $Z$ is a standard normal random vector. 
Indeed, there exists a symmetric $d \times d$ matrix $T$ such that $Z = Tg$ and $Z' = Tg'$
for some independent standard Gaussian vectors $g, g'$ in $\R^d$.
Thus $\< AZ,Z\> = \< TATg,g\> $ and $\< AZ,Z'\> = \< TATg,g'\> $. Replacing
$A$ by $TAT$ in the statement of the lemma, we see that we can assume that 
$Z$ is a standard normal random vector. 

Using the identical distribution of $Z$ and $Z'$ and Jensen's inequality, we obtain
\begin{align*}
E &:= \E \sup_{A \in \AA} | \< AZ,Z\> - \E \< AZ,Z\> |
= \E \sup_{A \in \AA} | \< AZ,Z\> - \E \< AZ',Z'\> | \\
&\le \E \sup_{A \in \AA} | \< AZ,Z\> - \< AZ',Z'\> |.
\end{align*}
Note the identity 
$\< AZ,Z\> - \< AZ',Z'\> = 2 \big\langle A ( \frac{Z+Z'}{\sqrt{2}} ), \frac{Z-Z'}{\sqrt{2}} \big\rangle$.
By rotation invariance of Gaussian measure, the pair of vectors
$\big( \frac{Z+Z'}{\sqrt{2}}, \frac{Z-Z'}{\sqrt{2}} \big)$ is distributed identically 
with $(Z,Z')$. Hence we conclude that 
$$
E \le 2 \E \sup_{A \in \AA} | \< AZ,Z'\> |.
$$
This completes the proof.
\end{proof}

\begin{remark}
It is not clear if a version of Decoupling Proposition~\ref{decoupling} holds for general distributions. 
The argument we gave relied heavily on the rotation invariance of the normal distribution.
\end{remark}

\subsection{Concentration}

The following observation is a form of the rotation invariance property of
normal distributions. 

\begin{lemma}								\label{sum of normals}
  Let $Z = (Z_1,\ldots,Z_d)$ be a centered normal random vector in $\R^d$ with 
  covariance matrix $\S$. Let $a = (a_1,\ldots,a_d) \in \R^d$.
  Then $\sum_{i=1}^d a_i Z_i$ is a centered normal random variable 
  with standard deviation 
  $\|\S^{1/2} a\|_2 \le \|\S\|^{1/2} \|a\|_2$. \qed
\end{lemma}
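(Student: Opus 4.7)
My plan is to reduce the statement to two very standard facts: that affine functionals of a jointly Gaussian vector are Gaussian, and that the operator norm of a symmetric positive semidefinite operator is multiplicative under square roots.

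First I observe that $\sum_{i=1}^d a_i Z_i = \< a, Z \>$ is a linear functional of the jointly normal vector $Z$, hence itself a normal random variable; since $\E Z = 0$, this Gaussian is centered. This already reduces the problem to computing its standard deviation.

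Next I compute the variance. By bilinearity of covariance and the definition $\S = \E [Z Z^T]$, we have
\[
\Var\!\Big( \sum_i a_i Z_i \Big) = a^T \S a.
\]
Since $\S$ is symmetric positive semidefinite, it admits a symmetric square root $\S^{1/2}$, and I can rewrite
\[
a^T \S a = \< \S^{1/2} a, \S^{1/2} a \> = \|\S^{1/2} a\|_2^2,
\]
so the standard deviation equals $\|\S^{1/2} a\|_2$, as claimed.

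Finally, the stated upper bound follows from the submultiplicativity of the operator norm: $\|\S^{1/2} a\|_2 \le \|\S^{1/2}\| \cdot \|a\|_2$, together with the observation (spectral theorem applied to the symmetric PSD matrix $\S$) that $\|\S^{1/2}\| = \|\S\|^{1/2}$. There is no real obstacle here; the lemma is a one-line consequence of the spectral theorem, and I include it only for ease of reference later in the proof of Theorem~\ref{main}.
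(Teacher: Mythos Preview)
Your argument is correct and is exactly the standard verification one would give; the paper itself provides no proof of this lemma (it is stated with a \qed\ as a well-known fact), so there is nothing to compare beyond noting that your write-up supplies the details the authors deliberately omitted.
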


Our proof of Theorem~\ref{main} will use the concentration of measure in the Gauss space. 
Such concentration results are usually stated in the literature for the
standard normal distribution, but it is straightforward to make an adjustment for 
general normal distributions.

\begin{proposition}[Concentration in the Gauss space, see \cite{LT} Section 1.1]				\label{concentration}
  Let $f: \R^d \to \R$ be a Lipschitz function, and denote $L = \|f\|_{\Lip}$. 
  Let $Z$ be a centered normal vector in $\R^d$ with covariance matrix $\S$.
  Then 
  $$
  \P \{ f(Z) - \E f(Z) \ge t \} \le \frac{1}{2} \exp \Big( -\frac{t^2}{2L^2\|\S\|} \Big) \quad \text{for all } t \ge 0.
  $$
\end{proposition}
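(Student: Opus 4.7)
The plan is to reduce the general centered normal case to the standard normal case by a linear change of variables, and then invoke the classical Gaussian concentration inequality (which is the version stated in \cite{LT}, Section 1.1).

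First, I would write $Z = \S^{1/2} g$, where $g$ is a standard normal vector in $\R^d$ and $\S^{1/2}$ denotes the symmetric positive-semidefinite square root of $\S$. This representation is valid because both sides are centered Gaussian with covariance $\S$. Define the auxiliary function $\tilde f(x) = f(\S^{1/2} x)$, so that $\tilde f(g)$ has the same distribution as $f(Z)$, and in particular $\E \tilde f(g) = \E f(Z)$.

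Next, I would bound the Lipschitz constant of $\tilde f$. For any $x, y \in \R^d$,
$$
|\tilde f(x) - \tilde f(y)| = |f(\S^{1/2} x) - f(\S^{1/2} y)| \le L \, \|\S^{1/2}(x-y)\|_2 \le L \, \|\S^{1/2}\| \, \|x-y\|_2,
$$
and since $\S$ is positive-semidefinite we have $\|\S^{1/2}\| = \|\S\|^{1/2}$. Hence $\|\tilde f\|_{\Lip} \le L \|\S\|^{1/2}$.

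Now I would apply the classical concentration of measure inequality in Gauss space for the standard normal distribution, which states that for any Lipschitz function $h \colon \R^d \to \R$ and standard Gaussian $g$,
$$
\P\{ h(g) - \E h(g) \ge t\} \le \tfrac{1}{2} \exp\!\Big( -\frac{t^2}{2 \|h\|_{\Lip}^2} \Big) \quad \text{for all } t \ge 0.
$$
Applying this to $h = \tilde f$ and using the bound $\|\tilde f\|_{\Lip}^2 \le L^2 \|\S\|$ in the denominator (which only increases the right-hand side) gives
$$
\P\{f(Z) - \E f(Z) \ge t\} = \P\{\tilde f(g) - \E \tilde f(g) \ge t\} \le \tfrac{1}{2} \exp\!\Big( -\frac{t^2}{2 L^2 \|\S\|} \Big),
$$
which is the desired inequality. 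There is essentially no obstacle here: the statement is really just a cosmetic generalization of the standard Gauss-space concentration inequality, and the only ingredient beyond that citation is the operator-norm identity $\|\S^{1/2}\| = \|\S\|^{1/2}$ used to track how the Lipschitz constant transforms under the whitening map $g \mapsto \S^{1/2} g$.
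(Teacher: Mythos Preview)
Your argument is correct. The paper does not actually prove this proposition: it is stated with a citation to \cite{LT} for the standard Gaussian case, together with the remark that ``it is straightforward to make an adjustment for general normal distributions.'' Your reduction via $Z = \S^{1/2} g$ and the bound $\|\tilde f\|_{\Lip} \le L\|\S\|^{1/2}$ is precisely that straightforward adjustment.
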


\begin{remark}
  If $f$ is a norm on $\R^d$, then by translation invariance,
  $\|f\|_{\Lip}$ equals the minimal number $L$ such that 
  $$
  f(x) \le \|x\|_2 \quad \text{for all } x \in \R^d.
  $$
\end{remark}

\subsection{Discretization}

The operator norm of a $p \times p$ matrix $A$ can be computed via
the associated bilinear form as
$$
\|A\| = \max_{x,y \in S^{p-1}} \< Ax,y\> 
$$
By an approximation argument, one can replace the sphere $S^{p-1}$ by 
an $\e$-net:

\begin{lemma}[Computing operator norm on a net, see \cite{v-rmt-tutorial}]			\label{on net}
  Let $A$ be a $p \times p$ matrix. Let $\NN$ be a $\d$-net of $S^{p-1}$ in the 
  Euclidean metric for some $\d \in [0,1)$. Then 
  $$
  \|A\| \le (1-\d)^{-2} \max_{x,y \in \NN} \< Ax,y\> .
  $$
\end{lemma}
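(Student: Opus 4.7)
The plan is a two-step discretization, replacing the two sphere variables by net points one at a time, picking up a factor of $(1-\d)^{-1}$ at each step. First I would recall the duality identity $\|A\| = \sup_{x \in S^{p-1}} \|Ax\|_2 = \sup_{x,y \in S^{p-1}} \< Ax, y\>$ (the second supremum is attained by taking $y = Ax/\|Ax\|_2$ at a maximizer $x$), so the operator norm is the supremum of the bilinear form $\< Ax,y\>$ over two copies of $S^{p-1}$; by compactness, the sup is attained.

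Next I would prove the following single-sphere approximation: for any fixed $u \in \R^p$,
\[
\|u\|_2 \le (1-\d)^{-1} \max_{y \in \NN} \< u, y\>.
\]
To see it, pick $y^* \in S^{p-1}$ with $\< u, y^*\> = \|u\|_2$, choose $y_0 \in \NN$ with $\|y^* - y_0\|_2 \le \d$, and decompose $\< u, y^*\> = \< u, y_0\> + \< u, y^* - y_0\>$; Cauchy--Schwarz bounds the second term by $\d\|u\|_2$, giving $\|u\|_2 \le \< u, y_0\> + \d\|u\|_2$, and rearranging yields the claim.

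Applying this with $u = Ax_0$ for each $x_0 \in \NN$ produces $\|Ax_0\|_2 \le (1-\d)^{-1} \max_{y \in \NN} \< Ax_0, y\>$. Now discretize the remaining variable: for any $x \in S^{p-1}$, pick $x_0 \in \NN$ with $\|x - x_0\|_2 \le \d$, so $\|Ax\|_2 \le \|Ax_0\|_2 + \|A(x-x_0)\|_2 \le \|Ax_0\|_2 + \d\|A\|$. Taking the supremum over $x \in S^{p-1}$ gives $\|A\| \le \max_{x_0 \in \NN} \|Ax_0\|_2 + \d\|A\|$, hence $\|A\| \le (1-\d)^{-1} \max_{x_0 \in \NN} \|Ax_0\|_2$. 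Chaining this with the previous inequality produces the advertised $(1-\d)^{-2}$ factor.

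There is no genuine obstacle beyond bookkeeping; the key design choice is the two-stage order of approximation. A single-stage argument, which replaces $x$ and $y$ simultaneously, leaves a cross-term $\< A(x-x_0), y-y_0\>$ of size $\d^2\|A\|$ and yields the inferior bound $(1-2\d-\d^2)^{-1}$. Splitting the approximation across two stages is what delivers the clean multiplicative factor $(1-\d)^{-2}$.
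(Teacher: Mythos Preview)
Your argument is correct and is exactly the standard two-step net approximation the paper is invoking by citing \cite{v-rmt-tutorial}; the paper gives no separate proof of this lemma. The only cosmetic point is that your remark about the simultaneous replacement yielding $(1-2\d-\d^2)^{-1}$ is not quite the right expression (simultaneous replacement gives a factor like $(1-2\d)^{-1}$, since the cross term $\d^2\|A\|$ helps rather than hurts), but this does not affect the proof itself.
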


We will construct a net $\NN$ by rounding off the coefficients of a vector $x$.

\begin{definition}[Regular vectors]
  Define the subset of {\em regular vectors} of the sphere $S^{p-1}$ as follows:
  \begin{align*}
  &\Reg_p(s) = \big\{ x \in S^{p-1}: \text{all coordinates satisfy } x_i^2 \in \{0, 1/s\} \big\}, \quad s \in [p]; \\
  &\Reg_p = \bigcup_{s \in [p]} \Reg_p(s).
  \end{align*}
  Note that $|\supp(x)| = s$ for all $x \in \Reg_p(s)$.
\end{definition}

\begin{lemma}[Computing operator norm on regular vectors]			\label{on Reg}
  Let $A$ be a $p \times p$ matrix. Then
  $$
  \|A\| \le 12 \lceil \ln(2p) \rceil^2 \max_{x,y \in \Reg_p} \< Ax, y\> . 
  $$
\end{lemma}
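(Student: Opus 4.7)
The plan is to represent any unit vector $x \in S^{p-1}$ as a telescoping sum $x = \sum_j \beta_j u_j$ of scaled regular vectors $u_j \in \Reg_p \cup \{0\}$ whose total coefficient weight satisfies $\sum_j |\beta_j| = O(\log p)$. Doing this for both $x$ and $y$ and expanding the bilinear form
\[
\< Ax, y\> = \sum_{j, l} \beta_j \gamma_l \< A u_j, v_l\>
\]
bounds $|\< Ax,y\>|$ by $(\sum_j |\beta_j|)(\sum_l |\gamma_l|) \cdot \max_{u,v \in \Reg_p}\< Au,v\> = O((\log p)^2) \cdot \max_{u,v \in \Reg_p}\< Au,v\>$; taking the supremum over $x,y \in S^{p-1}$ then yields the lemma, up to tracking the explicit constant.

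For the one-step approximation, I would fix $K = \lceil \log_2(2p) \rceil$ and partition $\supp(x)$ into dyadic magnitude shells $I_k = \{i : 2^{-k-1} < |x_i| \le 2^{-k}\}$ for $0 \le k \le K-1$, together with a tail $T = \{i : 0 < |x_i| \le 2^{-K}\}$. On each $I_k$ I replace $x_i$ by the midpoint value $\tfrac{3}{4}\cdot 2^{-k}\operatorname{sign}(x_i)$, producing $\tilde x = \sum_k \beta_k u_k$, where $u_k \in \Reg_p(s_k)\cup\{0\}$ has entries $\operatorname{sign}(x_i)/\sqrt{s_k}$ on $I_k$ (with $s_k = |I_k|$) and $\beta_k = \tfrac{3}{4}\cdot 2^{-k}\sqrt{s_k}$. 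The shell bound $s_k \cdot 4^{-k-1} \le \sum_{I_k} x_i^2 \le 1$ yields both $|\beta_k| \le 3/2$ and the key summability $\sum_k s_k \cdot 4^{-k} \le 4$. The latter controls the per-coordinate rounding error (at most $2^{-k-2}$ on $I_k$) and the tail $\sum_T x_i^2 \le p \cdot 4^{-K}$, giving $\|x - \tilde x\|^2 \le 1/4 + 1/(4p) < 1/2$.

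To remove the residual I would iterate: apply the same procedure to the unit vector $r_0/\|r_0\|$ where $r_0 = x - \tilde x$, so that $r_0 = \|r_0\|\tilde w_0 + r_1$ with $\|r_1\| \le \|r_0\|/\sqrt{2}$ and the new coefficient weight scaled by $\|r_0\| \le 1/\sqrt{2}$. After $j$ iterations the residual shrinks geometrically as $(1/\sqrt{2})^{j+1}$, and the weight contribution of step $j$ is at most $(3K/2)(1/\sqrt{2})^j$, so summing the geometric series yields the claimed bound $\sum_{j,k} |\beta_k^{(j)}| \le (3K/2)\cdot \sqrt{2}/(\sqrt{2}-1) = O(\log p)$, with the sum converging to $x$ in norm. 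Expanding $\< Ax, y\>$ bilinearly over the joint decomposition and bounding each pair $\< Au_k^{(j)}, v_l^{(i)}\>$ by $\max_{u,v \in \Reg_p} \< Au, v\>$ then finishes the proof.

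The conceptual content ends there; the main obstacle is purely arithmetic. To land on the precise constant $12 \lceil \ln(2p)\rceil^2$, one must tune the rounding midpoint, the dyadic ratio, and the residual contraction factor, and then convert carefully between $\log_2$ and $\ln$; none of these choices affect the order of the estimate, but only the implied constant. An alternative route would be to invoke Lemma~\ref{on net} with $\delta$ slightly less than $1$ applied to the normalized approximations $\tilde x/\|\tilde x\|$ (a $\delta$-net of $S^{p-1}$), which avoids the iteration at the cost of having to control how normalization affects the regular decomposition; I expect the iterative route above to give the cleaner constants.
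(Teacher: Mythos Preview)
Your iterative approximation argument is correct and contains the same core idea as the paper---representing each unit vector as a bounded-weight combination of regular vectors via dyadic level sets---but the paper's execution is shorter. The paper rounds each coordinate of $x\in S^{p-1}$ to a value in $\{\pm 2^{-k}:0\le k\le k_0\}$ with $k_0=\lceil\ln(2p)\rceil$, observes that the rounded vectors form a $0.71$-net of $S^{p-1}$, and invokes Lemma~\ref{on net} directly; each net element then decomposes by level sets into at most $k_0$ regular vectors with coefficients $\lambda_k\in[0,1]$, so the bilinear expansion picks up a factor $k_0^2$ and the net lemma contributes $(1-0.71)^{-2}\approx 12$. This is exactly the ``alternative route'' you mention at the end, and contrary to your expectation it gives the cleaner constant: the iteration you carry out to kill the residual is replaced entirely by the $(1-\delta)^{-2}$ factor from the net lemma, and no normalization issue arises because the coefficients $\lambda_k=2^{-k}\sqrt{|T_k|}\le 1$ follow immediately from $\sum_k 4^{-k}|T_k|=\|x\|_2^2=1$. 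Your approach buys self-containment (no appeal to Lemma~\ref{on net}) at the cost of the geometric-series bookkeeping and a somewhat larger implicit constant.
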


\begin{proof}
Define $k_0 = \lceil \ln(2p) \rceil$ and let $\NN$ consist of all vectors $x \in S^{p-1}$ such that for each coordinate
$x_i$ there is $k \in \{0,1,\ldots,k_0\}$ such that $|x_i| = 2^{-k}$.
It is easy to check that $\NN$ is a $0.71$-net of $S^{p-1}$.

By considering the level sets of a vector $x \in \NN$, one can represent 
$x = \l_1 x^{(1)} + \cdots +  \l_{k_0} x^{(k_0)}$ where
all $\l_k \in [0,1]$ and $x^{(k)}$ are regular vectors. 
Using this representation for $x$ and $y$ in Lemma~\ref{on net} with $\d = 0.71$,
we conclude by the linearity of the inner product that the estimate in the lemma holds. 
\end{proof}

\begin{lemma}[Computing operator norm: symmetric distributions]			\label{on symmetric}
  Let $A$ be a random $p \times p$ matrix such that $A$ and $A^T$ are identically 
  distributed. Then, for every $t \ge 0$, we have
  $$
  \P \{ \|A\| \ge t \}  
  \le 2 \P \Big\{ 12 \lceil \ln(2p) \rceil^2 \max_{\substack{r,s \in [p] \\ r \le s}} 
    \max_{\substack{x \in \Reg_p(r) \\ y \in \Reg_p(s)}} \< Ax, y\> \ge t \Big\}.
  $$  
\end{lemma}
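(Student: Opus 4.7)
The plan is to start from Lemma~\ref{on Reg}, which already reduces $\|A\|$ (up to the logarithmic factor $12\lceil\ln(2p)\rceil^2$) to the bilinear form $\langle Ax,y\rangle$ maximized over pairs of regular vectors. The statement to be proved differs from Lemma~\ref{on Reg} only in that the maximum is restricted to pairs $(x,y)\in\Reg_p(r)\times\Reg_p(s)$ with $r\le s$, at the cost of a factor of $2$ in probability. So the task is essentially a symmetrization-by-transpose argument.

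First, I would decompose
\[
\max_{x,y\in\Reg_p}\langle Ax,y\rangle
\;=\;\max_{r,s\in[p]}\;\max_{\substack{x\in\Reg_p(r)\\ y\in\Reg_p(s)}}\langle Ax,y\rangle
\;=\;\max\bigl(U,V\bigr),
\]
where $U$ denotes the maximum restricted to $r\le s$ and $V$ the maximum restricted to $r>s$. Denote the threshold $t'=t/(12\lceil\ln(2p)\rceil^2)$. Lemma~\ref{on Reg} then gives
\[
\P\{\|A\|\ge t\}\le\P\{\max(U,V)\ge t'\}\le\P\{U\ge t'\}+\P\{V\ge t'\},
\]
by a union bound.

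Next, I would exploit the hypothesis $A\stackrel{d}{=}A^T$ to show $\P\{V\ge t'\}=\P\{U\ge t'\}$ (actually, $\le$ suffices). The identity $\langle Ax,y\rangle=\langle A^{T}y,x\rangle$ lets one rewrite
\[
V=\max_{r>s}\;\max_{\substack{x\in\Reg_p(r)\\ y\in\Reg_p(s)}}\langle A^{T}y,x\rangle
=\max_{s<r}\;\max_{\substack{y\in\Reg_p(s)\\ x\in\Reg_p(r)}}\langle A^{T}y,x\rangle,
\]
which, after relabeling the dummy indices $(r,s)\mapsto(s,r)$ and the dummy vectors $(x,y)\mapsto(y,x)$, is exactly the quantity $U'$ obtained from $U$ by replacing $A$ with $A^{T}$ and further restricting to $r<s$. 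In particular $V\le U'$, and by the assumption $A\stackrel{d}{=}A^{T}$ we get $\P\{V\ge t'\}\le\P\{U'\ge t'\}=\P\{U\ge t'\}$ (with the original $A$). Combining, $\P\{\|A\|\ge t\}\le 2\P\{U\ge t'\}$, which is the claim.

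There is no real obstacle here: the only things to get right are the bookkeeping of the two inner maxima (regular vectors are closed under sign changes, so one need not worry about the sign of $\langle Ax,y\rangle$), the clean use of $\langle Ax,y\rangle=\langle A^{T}y,x\rangle$ to flip the roles of $r$ and $s$, and the appeal to $A\stackrel{d}{=}A^{T}$ at the level of events (not just expectations) to convert the bound on $V$ into a bound involving $A$ itself. The logarithmic prefactor $12\lceil\ln(2p)\rceil^2$ is inherited verbatim from Lemma~\ref{on Reg} and requires no further work.
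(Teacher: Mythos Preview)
Your proposal is correct and follows essentially the same route as the paper: invoke Lemma~\ref{on Reg}, split the maximum over $\Reg_p$ into the two regimes $r\le s$ and $r>s$, and then use $\langle Ax,y\rangle=\langle A^{T}y,x\rangle$ together with $A\stackrel{d}{=}A^{T}$ to see that the two resulting maxima are identically distributed, so a union bound gives the factor~$2$. Your bookkeeping with the strict inequality $r<s$ after swapping is a bit more careful than the paper's, but the argument is the same.
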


\begin{proof}
We can write the conclusion of Lemma~\ref{on Reg} as 
$$
\|A\| 
\le 12 \lceil \ln(2p) \rceil^2 \max_{r,s \in [p]} 
  \max_{\substack{x \in \Reg_p(r) \\ y \in \Reg_p(s)}} \< Ax, y\> . 
$$  
Taking the maximum separately over $r \le s$ and $r > s$ and replacing 
the maximum by the sum, we have
\begin{equation}								\label{two max max}
\|A\| 
\le 12 \lceil \ln(2p) \rceil^2 
  \max \Big( \max_{\substack{r,s \in [p] \\ r \le s}} 
  \max_{\substack{x \in \Reg_p(r) \\ y \in \Reg_p(s)}} \< Ax, y\> , \; 
  \max_{\substack{r,s \in [p] \\ s \le r}} 
  \max_{\substack{x \in \Reg_p(r) \\ y \in \Reg_p(s)}} \< Ax, y\> \Big).
\end{equation}
By the symmetry assumption on the distribution of $A$, the random variables
$\< Ax,y \> $ and $\< Ay,x \> $ are identically distributed for all $x,y$.
Therefore, the two double maxima in \eqref{two max max} are identically distributed. 
This yields the conclusion of the lemma.
\end{proof}

\section{Proof of Theorem~\ref{main}}				\label{s: proof}

\subsection{Decoupling and conditioning}							\label{s: decoupling}

By rescaling, we can assume without loss of generality that $\|\S\| = 1$.
We shall denote the entries of $M$ by $m_{ij}$.

By Proposition~\ref{decoupling}, it suffices to estimate $\E \|M \cdot \S'_n\|$, 
where $\S'_n$ denotes the decoupled sample covariance matrix defined in \eqref{Sn'}. 
By the definition of $\S'_n$ and the symmetry of $M$, the random matrices $M \cdot \S'_n$ 
and $(M \cdot \S'_n)^T$ are identically distributed. Therefore, Lemma~\ref{on symmetric}
applies and we get for every $t \ge 0$ that 
\begin{equation}					\label{max max}
\P \{ \|M \cdot \S'_n\| \ge t \} 
\le 2 \P \Big\{ 12 \lceil \ln(2p) \rceil^2
  \max_{\substack{r,s \in [p] \\ r \le s}} 
  \max_{\substack{x \in \Reg_p(r) \\ y \in \Reg_p(s)}} 
  \< (M \cdot \S'_n)x, y\> \ge t \Big\}.
\end{equation}
Writing the inner product in coordinates and rearranging the terms, we have 
\begin{equation}							\label{triple sum}
\< (M \cdot \S'_n)x, y\> 
= \frac{1}{n} \sum_{k=1}^n \sum_{i=1}^p \Big( \sum_{j=1}^p m_{ij} x_j X_{kj} \Big) y_i X'_{ki}.
\end{equation}

Let us fix $x$ and $y$ and condition on the random vectors $(X_1,\ldots, X_n)$.
Then the expression in \eqref{triple sum} defines a centered normal random variable. We shall estimate
its standard deviation by Lemma~\ref{sum of normals}. Since the covariance matrix $\S$ of each vector
$X'_k \in \R^p$ has operator norm at most $1$, the covariance matrix of the concatenated vector 
$(X'_k)_{k=1}^n \in \R^{pn}$ also has operator norm at most $1$. 
Then Lemma~\ref{sum of normals}
yields that the expression in \eqref{triple sum} is a centered normal random variable with standard deviation
at most $\s_x(X_1,\ldots,X_n) \|y\|_\infty$ where 
$$
\s_x(X_1,\ldots,X_n) 
= \frac{1}{n} \Big[ \sum_{k=1}^n \sum_{i=1}^p  \Big( \sum_{j=1}^p m_{ij} x_j X_{kj} \Big)^2 \Big]^{1/2}.
$$
We will need to bound this quantity uniformly for all $x$.

\subsection {Concentration}

Let $x \in \Reg_p(r)$. 
We will estimate $\s_x(X_1,\ldots,X_n)$ using concentration in Gauss space, Proposition~\ref{concentration}.
First, we have
\begin{align}										\label{E sigma}
\E \s_x(X_1,\ldots,X_n) 
&\le (\E \s_x(X_1,\ldots,X_n)^2)^{1/2} \notag\\
&= \frac{1}{n} \Big[ \sum_{k=1}^n \sum_{i=1}^p  \E \Big( \sum_{j=1}^p m_{ij} x_j X_{kj} \Big)^2 \Big]^{1/2} \notag\\
&\le \frac{1}{n} \Big( \sum_{k=1}^n \sum_{i=1}^p \sum_{j=1}^p m_{ij}^2 x_j^2 \Big)^{1/2}
  \quad \text{(by Lemma~\ref{sum of normals} and $\|\S\|=1$)} \notag\\
&\le \frac{1}{\sqrt{n}} \max_{j \in [p]} \Big( \sum_{i=1}^p m_{ij}^2 \Big)^{1/2}
  \quad \text{(because $\|x\|_2 = 1$)} \notag\\
&\le \frac{\|M\|_{1,2}}{\sqrt{n}} 
  \quad \text{(by definition).}
\end{align}

Next, we consider $\s_x: \R^{pn} \to \R$
as a function of the concatenated Gaussian vector 
$(X_1,\ldots,X_n) \in \R^{pn}$.
Computing the Lipschitz norm of $\s_x$ becomes easy once we write this function as
$$
\s_x(X_1,\ldots,X_n) = \frac{1}{n} \Big( \sum_{k=1}^n \| M (x \cdot X_k)\|_2^2 \Big)^{1/2}
$$
where as usual $x \cdot X_k$ denotes the Hadamard (coordinate-wise) product of vectors,
and the multiplication by $M$ is the ordinary (matrix) multiplication.
Separating $M$ and $x$, we obtain the bound
$$
\s_x(X_1,\ldots,X_n) \le \frac{1}{n} \|M\| \|x\|_\infty \Big( \sum_{k=1}^n \|X_k\|_2^2 \Big)^{1/2}.
$$
Using that $\|x\|_\infty = 1/\sqrt{r}$ since $x \in \Reg_p(r)$, we conclude that 
$$
\s_x(X_1,\ldots,X_n) \le \frac{\|M\|}{\sqrt{r} \, n} \cdot \|(X_1,\ldots,X_n)\|_2.
$$
By the remark below Proposition~\ref{concentration}, we have proved that
\begin{equation}								\label{Lip}
\|\s_x\|_{\Lip} \le \frac{\|M\|}{\sqrt{r} \, n}.
\end{equation}
In addition, since the covariance matrix $\S$ of each vector $X_k \in \R^p$ 
has operator norm at most $1$, the covariance matrix of the concatenated vector 
$(X_1,\ldots,X_n) \in \R^{pn}$ also has operator norm at most $1$.
From this and the bounds on the expectation \eqref{E sigma} and on the Lipschitz norm \eqref{Lip},
we conclude by Proposition~\ref{concentration} that for all $x \in \Reg_p(r)$ and $t \ge 0$,
\begin{equation}								\label{sx bound}
\P \Big\{ \s_x(X_1,\ldots,X_n) > \frac{\|M\|_{1,2}}{\sqrt{n}} + t \Big\} 
\le \frac{1}{2} \exp \Big( -\frac{t^2 r n^2}{2\|M\|^2} \Big).
\end{equation}

\subsection{Union bounds}

We return to estimating the random variable $\< (M \cdot \S'_n)x, y\> $
which we initiated in Section~\ref{s: decoupling}.
Let us fix $u \ge 1$. For each $x \in \Reg_p(r)$, we consider the events 
$$
\EE_x := \Big\{ \s_x(X_1,\ldots,X_n) 
  \le \frac{\|M\|_{1,2}}{\sqrt{n}} + u \frac{\|M\|}{n} \sqrt{10\ln(2ep)} \Big\}
$$
By \eqref{sx bound}, we have
\begin{equation}								\label{EEx}
\P(\EE_x) \ge 1 - \frac{1}{2} \exp \big( -5 u^2 r \ln(2ep) \big).
\end{equation}
Note that the function $\s_x(X_1,\ldots,X_n)$ and thus also the events $\EE_x$ are independent of the random variables 
$(X'_1,\ldots, X'_n)$. 

Let $x \in \Reg_p(r)$ and $y \in \Reg_p(s)$. 
As we noted in Section~\ref{s: decoupling}, conditioned on 
a realization of random variables $(X_1,\ldots,X_n)$ satisfying $\EE_x$, 
the random variable $\< (M \cdot \S'_n)x, y\> $
is distributed identically with a centered normal random variable $h$ 
whose standard deviation is bounded by
$$
\s_x(X_1,\ldots,X_n) \|y\|_\infty 
\le \frac{\|M\|_{1,2}}{\sqrt{sn}} + u \frac{\|M\|}{\sqrt{s} \, n} \sqrt{10\ln(2ep)} 
=: \s.
$$
Then by the usual tail estimate for Gaussian random variables, we have
$$
\P \{ \< (M \cdot \S'_n)x, y\> \ge \e \;|\; \EE_x \} 
\le \frac{1}{2} \exp(-\e^2/2\s^2)
\quad \text{for } \e \ge 0.
$$
Choosing
\begin{equation}								\label{epsilon}
\e = \e(u) := 2 \sqrt{3} \, u \frac{\|M\|_{1,2}}{\sqrt{n}} \sqrt{\ln(2ep)} 
  + 2 \sqrt{30} \, u^2 \frac{\|M\|}{n} \ln(2ep),
\end{equation}
we obtain 
$$
\P \big\{ \< (M \cdot \S'_n)x, y\> \ge \e \;|\; \EE_x \big\} 
\le \frac{1}{2} \exp \big( - 6 u^2 s \ln(2ep) \big)
$$
for all $x \in \Reg_p(r)$, $y \in \Reg_p(s)$.
We would like to take the union bound in this estimate over all $y \in \Reg_p(s)$. Note that 
$$
|\Reg_p(s)| = \binom{p}{s} 2^s \le \exp \big(s \ln (2ep/s) \big)
$$
as there are $\binom{p}{s}$ ways to choose the support  
and $2^s$ ways to choose the signs of the coefficients of a vector in $\Reg_p(s)$. 
Then 
\begin{align*}
\P \big\{ \max_{y \in \Reg_p(s)} \< (M \cdot \S'_n)x, y\> \ge \e \;|\; \EE_x \big\} 
&\le  \frac{1}{2} \exp \big( s \ln (2ep/s) \big) \exp \big( - 6 u^2 s \ln(2ep) \big) \\
&\le  \frac{1}{2} \exp \big( - 5 u^2 s \ln(2ep) \big)
\end{align*}
as $u \ge 1$.
Therefore, using \eqref{EEx}, we have
\begin{align}				\label{exp+exp}
\P \big\{ \max_{y \in \Reg_p(s)} &\< (M \cdot \S'_n)x, y\> \ge \e \big\} 
\le \P \big\{ \max_{y \in \Reg_p(s)} \< (M \cdot \S'_n)x, y\> \ge \e \;|\; \EE_x \big\} + \P\{ \EE_x^c\} \notag\\
&\le \frac{1}{2} \exp \big( - 5 u^2 s \ln(2ep) \big) + \frac{1}{2} \exp \big( - 5 u^2 r \ln(2ep) \big).
\end{align}
Now we take a further union bound over $x \in \Reg_p(r)$ for fixed $r,p \in [p]$, $r \le s$.
In this range, the second term in \eqref{exp+exp} dominates. Estimating as before 
$|\Reg_p(r)| \le \exp(r \ln (2ep/r))$, we obtain that
\begin{align*}
\P \big\{ \max_{\substack{x \in \Reg_p(r) \\ y \in \Reg_p(s)}} \< (M \cdot \S'_n)x, y\> \ge \e \big\} 
&\le \exp \big( r \ln (2ep/r) \big) \exp \big( - 5 u^2 r \ln(2ep) \big) \\
&\le \exp \big( - 4 u^2 r \ln(2ep) \big).
\end{align*}
Finally, we take the union bound over all allowed pairs $r,s$. Since $r \ge 1$ and $u \ge 1$, we have
\begin{align*}
\P \big\{ 
  \max_{\substack{r,s \in [p] \\ r \le s}} 
  \max_{\substack{x \in \Reg_p(r) \\ y \in \Reg_p(s)}}  
  \< (M \cdot \S'_n)x, y\> \ge \e \big\} 
&\le p^2 \exp \big( - 4 u^2 \ln(2ep) \big) \\
&\le \exp \big( - 2u^2 \ln(2ep) \big).
\end{align*}

Using \eqref{max max}, we have shown that
$$
\P \big\{ \|M \cdot \S'_n\| \ge 12 \lceil \ln(2p) \rceil^2 \e \big\} 
\le 2 \exp \big( - 2u^2 \ln(2ep) \big)
$$
for all $u \ge 1$ and for $\e = \e(u)$ defined in \eqref{epsilon}. 
Integration yields
\begin{equation}								\label{refined}
\E \|M \cdot \S'_n\| 
\le 84 \frac{\|M\|_{1,2}}{\sqrt{n}} \lceil \ln(2ep) \rceil^{5/2} 
  + 263 \frac{\|M\|}{n} \lceil \ln(2ep) \rceil^3.
\end{equation}
Decoupling Proposition~\ref{decoupling} completes the proof of Theorem~\ref{main}, giving also
an explicit bound on the absolute constant $C$ and a slightly better dependence on $p$.
\qed

\begin{remark}[Arbitrary distributions]
It is likely that the results of this paper generalize from Gaussian to arbitrary distributions 
in $\R^p$ with enough moments.
However, it is not clear whether a version of Decoupling Proposition~\ref{decoupling} holds
for general distributions. 
\end{remark}

\bibliographystyle{plain}
\bibliography{allref}

\end{document}